 \newtheorem{remark}{Remark}
 \newtheorem{lemma}[remark]{Lemma}
 \newtheorem{theorem}[remark]{Theorem}
 \newtheorem{corollary}[remark]{Corollary}
  \newtheorem{claim}[remark]{Claim}
  \newtheorem{conjecture}[remark]{Conjecture}
\title{The metric dimension of strong product graphs}
\author{Juan A. Rodr\'{\i}guez-Vel\'{a}zquez$^{(1)}$, Dorota Kuziak$^{(1)}$,\\
Ismael G. Yero$^{(2)}$ and Jos\'e M. Sigarreta$^{(3)}$\\
$^{(1)}${\small Departament d'Enginyeria Inform\`atica i Matem\`atiques,}\\
{\small Universitat Rovira i Virgili,}  {\small Av. Pa\"{\i}sos
Catalans 26, 43007 Tarragona, Spain.} \\{\small
juanalberto.rodriguez\@@urv.cat, dorota.kuziak\@@urv.cat}
\\
$^{(2)}${\small Departamento de Matem\'aticas, Escuela Polit\'ecnica Superior de Algeciras}\\
{\small Universidad de C\'adiz,} {\small
Av. Ram\'on Puyol s/n, 11202 Algeciras, Spain.} \\ {\small
ismael.gonzalez\@@uca.es}\\
$^3${\small Facultad de  Matem\'{a}ticas,} {\small Universidad Aut\'onoma de Guerrero}
\\
{\small Carlos E. Adame 5, Col. La Garita, Acapulco, Guerrero,
M\'{e}xico }\\{\small josemariasigarretaalmira@hotmail.com}}
\begin{document}
\maketitle

\begin{abstract}

For an ordered subset $S = \{s_1, s_2,\dots s_k\}$ of vertices and a vertex $u$ in a connected graph $G$, the metric representation of $u$ with respect to $S$ is the ordered $k$-tuple $ r(u|S)=(d_G(v,s_1), d_G(v,s_2),\dots,$ $d_G(v,s_k))$, where $d_G(x,y)$ represents the distance between the vertices $x$ and $y$. The set $S$ is a metric generator for $G$ if every two different vertices of $G$ have distinct metric representations. A minimum metric generator is called a metric
basis for $G$ and its cardinality, $dim(G)$, the metric dimension of $G$. It is well known that the problem of finding the metric dimension of a graph is NP-Hard.  In this paper we obtain closed formulae and tight bounds for the metric dimension of strong product graphs.
\end{abstract}

\noindent{\it Keywords:} Metric generator; metric basis; metric dimension; strong product graph; resolving set; locating set.

\noindent{\it AMS Subject Classification Numbers:} 05C12; 05C38; 05C69; 05C76.

\section{Introduction}

A {\em generator} of a metric space is a set $S$ of points in the space with the property that every point of the space is uniquely determined by its distances from the elements of $S$. Given a simple and connected graph $G=(V,E)$, we consider the metric $d_G:V\times V\rightarrow \mathbb{N}$, where $d_G(x,y)$ is the length of a shortest path between $x$ and $y$. $(V,d_G)$ is clearly a metric space. A vertex $v\in V$ is said to distinguish two vertices $x$ and $y$ if $d_G(v,x)\ne d_G(v,y)$. A set $S\subset V$ is said to be a \emph{metric generator} for $G$ if any pair of vertices of $G$ is distinguished by some element of $S$. A metric generator of minimum cardinality is called a \emph{metric basis}, and its cardinality the \emph{metric dimension} of $G$, denoted by $dim(G)$.

The concept of metric dimension was introduced by Slater in \cite{leaves-trees}, where the metric generators were called \emph{locating sets}, and studied independently by Harary and Melter \cite{harary}, where the metric generators were called \emph{resolving sets}. Applications of this invariant to the navigation of robots in networks are discussed in \cite{landmarks} and applications to chemistry in \cite{pharmacy1,pharmacy2}. This invariant was studied further in a number of other papers including for example \cite{pelayo1,chappell,chartrand,chartrand2,LocalMetric,yerocartpartres,CMWA}. Several variations of metric generators have been appearing in the literature, like those about resolving dominating sets \cite{brigham}, independent resolving sets \cite{chartrand1}, local metric sets \cite{LocalMetric}, resolving partitions \cite{chartrand2,yerocartpartres}, and strong metric generators \cite{strongDimensionCorona,seb}.

It was shown in \cite{Garey} that the problem of computing $dim(G)$ is NP-hard. This suggests
finding the metric dimension for special classes of graphs or obtaining good bounds on this
invariant. Metric basis have been studied, for instance, for digraphs \cite{Oellermann}, Cartesian product graphs \cite{pelayo1,yerocartpartres}, corona product graphs \cite{strongDimensionCorona,CMWA}, distance-hereditary graphs \cite{may-oellermann}, and Hamming graphs \cite{strong-hamming}.
In this paper we study the problem of finding exact values or sharp
bounds for the metric dimension of strong product graphs and express
these in terms of invariants of the factor graphs.

The strong product of two graphs $G=(V_1,E_1)$ and $H=(V_2,E_2)$ is the graph $G\boxtimes H=(V,E)$, such that $V=V_1\times V_2$ and two vertices $(a,b),(c,d)\in V$ are adjacent in $G\boxtimes H$ if and only if
\begin{itemize}
\item $a=c$ and $bd\in E_2$ or
\item $b=d$ and $ac\in E_1$ or
\item $ac\in E_1$ and $bd\in E_2$.
\end{itemize}
One of our tools will be a well-known result which states the relationship between the distance between two vertices in $G\boxtimes H$ and the  distances between the corresponding  vertices in the factor graphs.
\begin{remark}{\rm \cite{BookProduct}} Let $G$ and $H$ be two connected graphs. Then
$$d_{G\boxtimes H}((a,b),(c,d))=\max \{d_G(a,c), d_H(b,d)\}.$$
\end{remark}

\section{Results}

We begin with a general upper bound for the metric dimension of strong product graphs.

\begin{theorem}\label{generalUpperBound}
Let $G$ and $H$ be two connected graphs of order $n_1\ge 2$ and $n_2$, respectively. Then $$dim(G\boxtimes H)\le n_1\cdot dim(H) + n_2\cdot dim(G) - dim(G)\cdot dim(H).$$
\end{theorem}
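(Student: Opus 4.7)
The plan is to exhibit an explicit metric generator of the claimed size and then verify it resolves every pair of vertices via the distance formula in the remark. Let $A\subseteq V_1$ be a metric basis of $G$ and $B\subseteq V_2$ a metric basis of $H$, so $|A|=dim(G)$ and $|B|=dim(H)$. I would take
\[
 S \;=\; (V_1\times B)\;\cup\;(A\times V_2).
\]
By inclusion--exclusion, $|S| = n_1|B| + n_2|A| - |A||B| = n_1\cdot dim(H) + n_2\cdot dim(G) - dim(G)\cdot dim(H)$, so it suffices to prove that $S$ is a metric generator for $G\boxtimes H$.

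Given two distinct vertices $(a,b)$ and $(c,d)$, I would split into the usual three cases. If $a=c$ and $b\neq d$, pick $y\in B$ with $d_H(b,y)\neq d_H(d,y)$; then by the remark $d_{G\boxtimes H}((a,b),(a,y))=d_H(b,y)$ and $d_{G\boxtimes H}((c,d),(a,y))=d_H(d,y)$, so $(a,y)\in V_1\times B\subseteq S$ distinguishes the pair. The case $b=d$ and $a\neq c$ is symmetric, using $A\times V_2$.

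The main step is the case $a\neq c$ and $b\neq d$, where both coordinates differ and the $\max$ formula can in principle mask a difference. Here I choose $x\in A$ with $d_G(a,x)\neq d_G(c,x)$ and, without loss of generality, assume $d_G(a,x)>d_G(c,x)$. Then looking at the landmark $(x,d)\in A\times V_2\subseteq S$, the remark gives
\[
 d_{G\boxtimes H}((c,d),(x,d)) \;=\; d_G(c,x),
\]
while
\[
 d_{G\boxtimes H}((a,b),(x,d)) \;=\; \max\{d_G(a,x),d_H(b,d)\} \;\ge\; d_G(a,x) \;>\; d_G(c,x).
\]
So $(x,d)$ distinguishes the pair. (If instead $d_G(a,x)<d_G(c,x)$, the symmetric choice $(x,b)$ works.) The point I expect to need care with is precisely this case: one must pair the ``$G$-distinguishing'' landmark $x$ with the second coordinate of the vertex that is \emph{closer} to $x$ in $G$, so that the zero contribution from the $H$-factor forces the $\max$ to equal the smaller $G$-distance and thereby keeps the two metric values separated. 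Once this pairing trick is in hand, the verification is immediate, and combining the three cases yields $dim(G\boxtimes H)\le |S|$, establishing the bound.
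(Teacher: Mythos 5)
Your proposal is correct and uses the same metric generator $S=(V_1\times B)\cup(A\times V_2)$ and the same case split as the paper. The only difference is in the mixed case $a\ne c$, $b\ne d$: you give a direct argument with a single landmark $(x,d)\in A\times V_2$ whose second coordinate collapses the $\max$ to $d_G(c,x)$, whereas the paper argues by contradiction from assuming that neither $(u_i,v_\beta)$ nor $(u_k,v_\beta)$ in $V_1\times S_2$ distinguishes the pair; both verifications are valid.
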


\begin{proof}
Let $V_1=\{u_1,u_2,...,u_{n_1}\}$ and $V_2=\{v_1,v_2,...,v_{n_2}\}$ be the set of vertices of $G$ and $H$, respectively. Let $S=(V_1\times S_2)\cup (S_1\times V_2)$, where $S_1$ and $S_2$ are metric basis for $G$ and $H$, respectively. 
 Let $(u_i,v_j)$ and $(u_k,v_l)$ be two different vertices of $G\boxtimes H$. 
 Let $u_\alpha \in S_1$ such that $u_i,u_k$ are distinguished by $u_\alpha$ and let $v_\beta\in S_2$ such that $v_j,v_l$ are distinguished by $v_\beta$. If $i=k$, then $(u_i,v_j)$ and $(u_k,v_l)$ are distinguished by  $(u_i,v_{\beta}) \in (V_1\times S_2)\subset S$. Analogously, if $j=l$, then $(u_i,v_j)$ and $(u_k,v_l)$ are distinguished by  $(u_{\alpha},v_j)\in (S_1\times V_2)\subset S$. If $i\ne k$ and $j\ne l$, then we suppose that neither $(u_i,v_\beta)$ nor $(u_k,v_\beta)$ distinguishes the pair $(u_i,v_j), (u_k,v_l)$, \emph{i.e.,}
\begin{equation}
d_{G\boxtimes H}((u_i,v_j),(u_i,v_\beta)) = d_{G\boxtimes H}((u_k,v_l),(u_i,v_\beta))   \label{fori}
\end{equation}
and
\begin{equation}
d_{G\boxtimes H}((u_i,v_j),(u_k,v_\beta)) = d_{G\boxtimes H}((u_k,v_l),(u_k,v_\beta)).  \label{fork}
\end{equation}
By (\ref{fori}) we have $d_H(v_j,v_\beta) = \max \{d_G(u_k,u_i), d_H(v_l,v_\beta)\}$ and since $d_H(v_j,v_\beta)\ne d_H(v_l,v_\beta)$, we obtain that
\begin{equation}
d_H(v_j,v_\beta) = d_G(u_k,u_i).   \label{confori}
\end{equation}
Also, by (\ref{fork}) we have $d_H(v_l,v_\beta) = \max \{d_G(u_i,u_k), d_H(v_j,v_\beta)\}$ and since $d_H(v_j,v_\beta)\ne d_H(v_l,v_\beta)$, we obtain that
\begin{equation}
d_H(v_l,v_\beta) = d_G(u_i,u_k).  \label{confork}
\end{equation}

From (\ref{confori}) and (\ref{confork}) we have that $d_H(v_j,v_\beta) = d_H(v_l,v_\beta)$ which is a contradiction with the statement that $v_j,v_l$ are distinguished by $v_\beta$ in $H$.
\end{proof}

Since $K_{n_1}\boxtimes K_{n_2} \cong K_{n_1\cdot n_2}$ and for any complete graph $K_n$, $dim(K_n)=n-1$, we deduce
\begin{align*}
dim(K_{n_1}\boxtimes K_{n_2})  &= n_1\cdot n_2 - 1 \\
&=n_1(n_2-1)+n_2(n_1-1)-(n_1-1)(n_2-1)\\
& = n_1\cdot dim(K_{n_2}) + n_2\cdot dim(K_{n_1}) - dim(K_{n_1})\cdot dim(K_{n_2}).
\end{align*}
Therefore,  the above bound is tight. Examples of non-complete graphs where the above bound is attained can be derived from Theorem \ref{Generalizacompleto-por-G}.

Given  two vertices $x$ and $y$ in a connected graph $G=(V,E)$, the interval $I[x, y]$ between $x$ and $y$ is defined as the collection of all vertices which lie on some shortest $x-y$ path. Given a nonnegative integer $k$ we say that \emph{$G$ is self $k$-resolved} if for every two different vertices $x,y\in V$, there exists $w\in V$ such that
\begin{itemize}
\item $d_G(y,w)\ge k$ and $x \in  I[y,w]$ or
\item $d_G(x,w)\ge k$ and  $y \in  I[x,w]$.
\end{itemize}
For instance, the path graphs $P_n$ ($n\ge 2$) are self $\left\lceil \frac{n}{2}\right\rceil$-resolved, the two-dimensional grid graphs $P_n\square P_m$ are self $\left(\lceil\frac{n}{2}\rceil+\lceil\frac{m}{2}\rceil\right)$-resolved, the hypercube graphs $Q_k$ are self $k$-resolved and the pseudo sphere graphs $S_{k,r}$ ($k,r\ge 2$)  are self  $k$-resolved, where $S_{k,r}$ is a graph  defined as follows: we consider $r$ path graphs of order greater than or equal to $k+1$ and we identify one extreme of each one of the $r$ path graphs in one pole $a$ and all the other extreme vertices of the paths in a pole $b$. In particular, $S_{k,2}$ is a cycle graph.

\begin{theorem} \label{resolvedGraphs}
Let $H$ be a self $k$-resolved graph of order $n_2$ and let $G$ be a graph of diameter $D(G)<k$. Then
$$dim(G\boxtimes H)\le n_2\cdot dim(G).$$
\end{theorem}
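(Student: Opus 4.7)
The plan is to construct an explicit metric generator of the stated size. Let $S_1$ be a metric basis of $G$, and take $S = S_1 \times V_2 \subseteq V(G \boxtimes H)$, so $|S| = \dim(G)\cdot n_2$; it suffices to show $S$ resolves every pair of vertices in $G\boxtimes H$. Given distinct vertices $(u_i,v_j)$ and $(u_k,v_l)$, I will split into the easy case $v_j = v_l$ and the main case $v_j \neq v_l$, using the distance formula from the Remark throughout.

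When $v_j = v_l$, we have $u_i \neq u_k$; picking $u_\alpha \in S_1$ with $d_G(u_i,u_\alpha)\neq d_G(u_k,u_\alpha)$, the max formula reduces both distances from $(u_\alpha, v_j)\in S$ to the $G$-distances, and the pair is distinguished. In the main case $v_j \neq v_l$, I will invoke the self $k$-resolved hypothesis on $H$ to obtain $w \in V_2$ such that, up to relabeling, $d_H(v_l,w)\ge k$ and $v_j \in I[v_l,w]$. For \emph{any} $u_\alpha \in S_1$, consider the landmark $(u_\alpha,w) \in S$. Since $D(G) < k \le d_H(v_l,w)$, we have $d_G(u_k,u_\alpha)\le D(G) < d_H(v_l,w)$, so the max formula gives $d_{G\boxtimes H}((u_k,v_l),(u_\alpha,w)) = d_H(v_l,w)$. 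On the other hand, $v_j$ lying on a shortest $v_l$--$w$ path with $v_j \neq v_l$ forces $d_H(v_j,w) < d_H(v_l,w)$, while also $d_G(u_i,u_\alpha)\le D(G) < d_H(v_l,w)$. Hence $d_{G\boxtimes H}((u_i,v_j),(u_\alpha,w)) = \max\{d_G(u_i,u_\alpha),d_H(v_j,w)\} < d_H(v_l,w)$, and the pair is distinguished.

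The key driver of the argument is the hypothesis $D(G) < k$: it ensures that whenever we probe with a vertex $w$ that is at $H$-distance at least $k$ from one endpoint, the $G$-coordinate is too small to ever bump up or equalise the maxima, so the strict inequality $d_H(v_j,w) < d_H(v_l,w)$ supplied by self $k$-resolvedness propagates faithfully into the strong product. The main conceptual obstacle I anticipate is precisely this interplay between the two coordinates under the $\max$; once one sees that self $k$-resolvedness produces the needed \emph{strict} gap in $H$-distances and that $D(G) < k$ neutralises the $G$-coordinate, the verification in each case is essentially a short comparison via the Remark, with no further constraint on which $u_\alpha \in S_1$ is chosen.
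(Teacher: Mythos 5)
Your proof is correct and follows essentially the same route as the paper's: the same generator $S = S_1 \times V_2$, the same case split on whether the $H$-coordinates agree, and the same use of $D(G) < k$ together with the strict drop $d_H(v_j,w) < d_H(v_l,w)$ from $v_j \in I[v_l,w]$ to make the landmark $(u_\alpha,w)$ work for an arbitrary $u_\alpha \in S_1$. No substantive differences.
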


\begin{proof}
Let $V_1=\{u_1,u_2,...,u_{n_1}\}$ and $V_2=\{v_1, v_2,...,v_{n_2}\}$ be the set of vertices of $G$ and $H$, respectively. Let $S_1$ be a metric generator for $G$. We will show that $S=S_1\times V_2$ is a metric generator for $G\boxtimes H$.  Let $(u_i,v_j),(u_r,v_l)$ be two different vertices of $G\boxtimes H$. We differentiate the following two cases.

{\bf Case 1.} $j=l$. Since  $i\ne r$ and  $S_1$ is a metric generator for $G$,  there exists $u\in S_1$ such that $d_G(u_i,u)\ne d_G(u_r,u)$. Hence,
$$d_{G\boxtimes H}((u_i,v_j),(u,v_j))=d_G(u_i,u)\ne d_G(u_r,u)=d_{G\boxtimes H}((u_r,v_j),(u,v_j)).$$

{\bf Case 2. } $j\ne l$. Since $H$ is self $k$-resolved, there exists $v\in V_2$ such that $d_H(v,v_l)\ge k$ and $v_j \in  I[v,v_l]$  or $d_H(v,v_j)\ge k$ and $v_l\in  I[v,v_j]$. Say $d_H(v,v_l)\ge k$ and $v_j \in  I[v,v_l]$.
In such a case, for every $u\in S$ we have,
\begin{align*}
d_{G\boxtimes H}((u_i,v_j),(u,v))&=\max\{d_G(u_i,u), d_H(v_j,v)\}\\
& < d_H(v,v_l) \\
&=\max\{d_G(u,u_r), d_H(v,v_l)\}\\
&=d_{G\boxtimes H}((u_r,v_l),(u,v)).
\end{align*}
Therefore, $S$ is a metric generator for $G\boxtimes H$.
\end{proof}

Now we derive some consequences of the above result.
\begin{corollary}\label{completo-camino-ciclo+otros}
Let $n_1\ge 2$ be an integer.
\begin{itemize}
\item  For any integer $n_2\ge 4$ such that $n_1-1 < \left\lfloor \frac{n_2}{2}\right\rfloor$, $$dim(P_{n_1}\boxtimes C_{n_2})\le n_2.$$

\item  Let $k\ge 2$ be an integer. For any  self $k$-resolved graph $H$ of order $n_2$, $$dim(K_{n_1}\boxtimes H)\le (n_1-1)n_2.$$
\end{itemize}
\end{corollary}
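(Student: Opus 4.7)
The overall plan is to derive both items from Theorem \ref{resolvedGraphs} by choosing the right value of $k$ and verifying that the factor graphs satisfy the hypotheses. Apart from the standard facts $dim(P_{n_1}) = 1$, $dim(K_{n_1}) = n_1 - 1$, $D(P_{n_1}) = n_1 - 1$, and $D(K_{n_1}) = 1$, the only substantive ingredient is that the cycle $C_{n_2}$ is self $\lfloor n_2/2 \rfloor$-resolved. This is consistent with the paper's remark that $S_{k,2}$ is a cycle, but it is worth giving a brief direct verification.

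For the first bullet, set $G = P_{n_1}$, $H = C_{n_2}$, and $k = \lfloor n_2/2 \rfloor$. The hypothesis $n_1 - 1 < \lfloor n_2/2 \rfloor$ is then precisely the requirement $D(G) < k$ of Theorem \ref{resolvedGraphs}. To verify that $C_{n_2}$ is self $k$-resolved, pick two distinct vertices $x,y$: if $d_{C_{n_2}}(x,y) \ge k$, then the choice $w := x$ already satisfies $d(y,w) \ge k$ and $x \in I[y,w]$; otherwise, take a shortest arc from $y$ to $x$ in $C_{n_2}$ and continue along the cycle in the same direction until the total walk length equals $\lfloor n_2/2 \rfloor$, ending at a vertex $w$. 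This extended walk is still a geodesic because its length equals the diameter of $C_{n_2}$, so $d(y,w) = k$ and $x \in I[y,w]$ hold by construction. Applying Theorem \ref{resolvedGraphs} then delivers $dim(P_{n_1}\boxtimes C_{n_2}) \le n_2\cdot dim(P_{n_1}) = n_2$.

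For the second bullet, take $G = K_{n_1}$; since $D(K_{n_1}) = 1 < 2 \le k$ and $dim(K_{n_1}) = n_1 - 1$, Theorem \ref{resolvedGraphs} immediately yields $dim(K_{n_1}\boxtimes H) \le n_2(n_1 - 1)$. The only delicate step in the whole proposal is ensuring that the extended arc in the cycle really is a shortest path, which is guaranteed by capping its length at the diameter $\lfloor n_2/2 \rfloor$; beyond this I anticipate no real obstacle.
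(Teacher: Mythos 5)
Your proposal is correct and follows essentially the same route as the paper: the corollary is stated there as an immediate consequence of Theorem \ref{resolvedGraphs}, using that $C_{n_2}$ is self $\left\lfloor n_2/2\right\rfloor$-resolved (the paper gets this from its remark that $S_{k,2}$ is a cycle, whereas you verify it directly by extending the shortest $y$--$x$ arc to a geodesic of length $\left\lfloor n_2/2\right\rfloor$) together with $dim(P_{n_1})=1$, $D(P_{n_1})=n_1-1$, $dim(K_{n_1})=n_1-1$ and $D(K_{n_1})=1$. Your direct check of the cycle's self-resolvedness is sound and is a reasonable substitute for the paper's appeal to the pseudo sphere example.
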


Given a vertex $v$ of a graph $G=(V,E)$, we denote by $N_G(v)$ the open neighborhood of $v$, {\it i.e.,} the set of neighbors of $v$, and by $N_G[v]$ the closed neighborhood of $v$, {\it i.e.,} $N_G[v]=N_G(v)\cup \{v\}$. Two  vertices $u$ and $v$ are false twins if $N_G(u)=N_G(v)$, while they are true twins if $N_G[u]=N_G[v]$.
Note that if two vertices $u$ and $v$ of a graph $G=(V,E)$ are (true or false) twins, then  $d_G(x,u)=d_G(x,v)$, for every $x\in V-\{u,v\}$.
We define the true twin  equivalence relation ${\cal R}$ on $V(G)$ as follows:
$$x {\cal R} y \leftrightarrow N_G[x]=N_G[y].$$
If the true twin equivalence classes are $U_1,U_2,...,U_t$, then every metric generator of $G$ must contain at least $|U_i|-1$ vertices from $U_i$ for each $i\in \{1,...,t\}$. Therefore, $$dim(G)\ge \sum_{i=1}^t(|U_i|-1)=n-t,$$ where $n$ is the order of $G$.


\begin{theorem}\label{Generalizacompleto-por-G}
Let $G$ and $H$ be two nontrivial connected graphs of order $n_1$ and $n_2$ having $t_1$ and $t_2$ true twin equivalent classes, respectively. Then $$dim(G\boxtimes H)\ge n_1n_2-t_1t_2.$$
Moreover, if $dim(G)=n_1-t_1$ and $dim(H)=n_2-t_2$, then  $$dim(G\boxtimes H)= n_1n_2-t_1t_2.$$
\end{theorem}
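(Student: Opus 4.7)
The plan is to identify the true twin equivalence classes of $G\boxtimes H$, apply the general lower bound $\dim\ge n-t$ recalled just before the statement, and then combine with Theorem~\ref{generalUpperBound} for the equality case.

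\textbf{Step 1: Neighborhoods in the strong product.} The key structural fact I would record first is the identity
\[
N_{G\boxtimes H}[(u,v)] \;=\; N_G[u]\times N_H[v],
\]
which is immediate from the adjacency rule of $\boxtimes$ (the three defining clauses together say exactly that $(a,b)\sim(c,d)$ or $(a,b)=(c,d)$ iff $a\in N_G[c]$ and $b\in N_H[d]$).

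\textbf{Step 2: True twin classes are products.} Using Step 1, I would show that $(u,v)$ and $(u',v')$ are true twins in $G\boxtimes H$ if and only if $u,u'$ are true twins in $G$ and $v,v'$ are true twins in $H$. The ``if'' direction is direct. For ``only if'', observe that $N_G[u]\times N_H[v]=N_G[u']\times N_H[v']$ with all four factors nonempty forces $N_G[u]=N_G[u']$ and $N_H[v]=N_H[v']$. Consequently, if $U_1,\dots,U_{t_1}$ and $W_1,\dots,W_{t_2}$ are the true twin classes of $G$ and $H$, then the true twin classes of $G\boxtimes H$ are exactly the $t_1t_2$ sets $U_i\times W_j$.

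\textbf{Step 3: Lower bound.} Applying the general inequality $\dim(\Gamma)\ge |V(\Gamma)|-t(\Gamma)$ (stated in the paragraph immediately preceding the theorem) to $\Gamma=G\boxtimes H$, which has $n_1 n_2$ vertices and $t_1 t_2$ true twin classes by Step~2, yields
\[
\dim(G\boxtimes H)\;\ge\; n_1 n_2 - t_1 t_2,
\]
as required.

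\textbf{Step 4: Equality under the extra hypothesis.} Assuming $\dim(G)=n_1-t_1$ and $\dim(H)=n_2-t_2$, I would plug these values into the upper bound of Theorem~\ref{generalUpperBound} and simplify:
\[
\dim(G\boxtimes H)\le n_1(n_2-t_2)+n_2(n_1-t_1)-(n_1-t_1)(n_2-t_2)=n_1n_2-t_1t_2.
\]
Combined with Step~3, this gives the claimed equality. The only genuine content is Step~2 (the product characterization of closed neighborhoods and hence of true twin classes); everything else is bookkeeping, so I do not anticipate a real obstacle.
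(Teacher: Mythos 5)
Your proposal is correct and follows essentially the same route as the paper: both identify $N_{G\boxtimes H}[(u,v)]=N_G[u]\times N_H[v]$, conclude that each product $U_i\times U'_j$ of true twin classes is a set of mutual true twins in $G\boxtimes H$, apply the twin-counting lower bound, and then invoke Theorem~\ref{generalUpperBound} for the equality case. Your Step~2 is marginally stronger than what the paper records (you also prove the ``only if'' direction, characterizing the twin classes exactly), but this extra precision is not needed for the bound and does not change the argument.
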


\begin{proof}
Let $U_1,U_2,...,U_{t_1}$ and $U'_1,U'_2,...,U'_{t_2}$ be the true twin equivalence classes of $G$ and $H$, respectively.
Since  each  $U_i$ (and $U'_j$) induces  a clique and its vertices have identical closed neighborhoods, $U_i\times U'_j$  induces  a clique in $G\boxtimes H$ and its vertices have identical closed neighborhoods, \emph{i.e.}, for every $a,c\in U_i$ and $b,d\in U'_j$,
\begin{align*}N_{G\boxtimes H}[(a,b)]&= \{(x,y): \; x\in N_G[a], y\in N_H[b]\}\\
&= \{(x,y): \; x\in N_G[c],y\in N_H[d]\}\\
&=N_{G\boxtimes H}[(c,d)].
\end{align*}
Hence, $V(G)\times V(H)$ is partitioned as $$V(G)\times V(H)=\bigcup_{j=1}^{t_2}\left(\bigcup_{i=1}^{t_1} U_i\times U'_j \right),$$
where   $U_i\times U'_j$  induces a clique in $G\boxtimes H$ and its vertices have identical closed neighborhoods. Therefore,  the metric
dimension of $G\boxtimes H$  is at least  $\sum_{j=1}^{t_2}\left(\sum_{i=1}^{t_1}(|U_i||U_j| -1)\right)=n_1n_2-t_1t_2.$

Finally, if $dim(G)=n_1-t_1$ and $dim(H)=n_2-t_2$, then the above bound and Theorem \ref{generalUpperBound} lead to $dim(G\boxtimes H)= n_1n_2-t_1t_2.$
\end{proof}

As an example of non-complete graph $G$ of order $n$ having $t$ true twin equivalent classes, where $dim(G)=n-t$, we take $G=K_1+(\displaystyle\bigcup_{i=1}^lK_{r_i})$, $r_i\ge 2$, $l\ge 2$. In this case $G$ has $t=l+1$ true twin equivalent classes, $n=1+\sum_{i=1}^lr_i$ and $dim(G)=\sum_{i=1}^l(r_i-1)=n-t$.



\begin{corollary}\label{GeneralizCoroInferior}
Let $H$ be a graph of order $n_2$. Let $G$  be a nontrivial connected graph  of order $n_1$   having $t_1$ true twin equivalent classes. Then
$$dim(G\boxtimes H)\ge  n_2(n_1-t_1).$$
\end{corollary}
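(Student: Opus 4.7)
The plan is to mimic, but specialize, the twin-counting argument used to prove Theorem~\ref{Generalizacompleto-por-G}: I will use only the twin structure of the factor $G$, treating the second factor $H$ as producing disjoint ``copies'' of each true twin class of $G$ inside $G\boxtimes H$.

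First I would check that for every vertex $v\in V(H)$ and every true twin equivalence class $U_i$ of $G$, the set $U_i\times\{v\}$ consists of pairwise true twins in $G\boxtimes H$. Indeed, for $a,c\in U_i$ one has $N_G[a]=N_G[c]$, and from the product description of closed neighborhoods $N_{G\boxtimes H}[(a,v)]=N_G[a]\times N_H[v]$ (already exploited in the proof of Theorem~\ref{Generalizacompleto-por-G}) it follows that $N_{G\boxtimes H}[(a,v)]=N_{G\boxtimes H}[(c,v)]$.

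Next I would invoke the standard twin lower bound recalled just before Theorem~\ref{Generalizacompleto-por-G}: every metric generator for $G\boxtimes H$ must contain at least $|U_i|-1$ vertices of each such set $U_i\times\{v\}$. As $(i,v)$ ranges over $\{1,\dots,t_1\}\times V(H)$, the sets $U_i\times\{v\}$ are pairwise disjoint (the $U_i$'s partition $V(G)$, and the second coordinate separates different $v$'s), so the individual lower bounds add to give
$$dim(G\boxtimes H)\ge\sum_{v\in V(H)}\sum_{i=1}^{t_1}(|U_i|-1)=n_2(n_1-t_1),$$
as required.

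There is no serious obstacle here: the result is essentially a ``projection'' of Theorem~\ref{Generalizacompleto-por-G} onto the twin structure of the first factor, and the key observation is simply that twinning in $G$ lifts fiberwise to twinning in $G\boxtimes H$. Alternatively, one could derive the bound from Theorem~\ref{Generalizacompleto-por-G} itself by using $t_2\le n_2$ (after noting that $H$ must be connected for $G\boxtimes H$ to be connected, so that $dim$ is defined), but the direct twin argument above has the advantage of not requiring any hypothesis on the twin structure of $H$.
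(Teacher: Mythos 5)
Your argument is correct. The paper states this as an immediate consequence of Theorem~\ref{Generalizacompleto-por-G}: since $t_2\le n_2$, one has $n_1n_2-t_1t_2\ge n_1n_2-t_1n_2=n_2(n_1-t_1)$. Your direct route is the same twin-counting idea, but applied to the fibers $U_i\times\{v\}$ instead of the products $U_i\times U'_j$ of twin classes; the key facts you use (closed neighborhoods multiply under $\boxtimes$, and any set of pairwise true twins can miss at most one vertex of a metric generator, with the bounds adding over disjoint such sets) are all sound. The one substantive point in favor of your version is that the corollary, as stated, only assumes $H$ is ``a graph'' of order $n_2$, whereas Theorem~\ref{Generalizacompleto-por-G} formally requires $H$ to be nontrivial and connected; your fiberwise argument needs no hypothesis on the twin structure of $H$ and still works when $n_2=1$, so it covers the stated generality more honestly (connectivity of $H$ is of course still needed for $G\boxtimes H$ to be connected and $dim$ to be defined, as you note).
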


Theorem \ref{resolvedGraphs} and Corollary \ref{GeneralizCoroInferior} lead to the following result.
\begin{theorem}\label{Th-dim=n(n1-1)}
Let $H$ be a self $k$-resolved graph of order $n_2$ and let  $G$  be a nontrivial connected graph  of order $n_1$   having $t_1$ true twin equivalent classes and  diameter $D(G)<k$. If $dim(G)=n_1-t_1$, then
$$dim(G\boxtimes H)= n_2(n_1-t_1).$$
\end{theorem}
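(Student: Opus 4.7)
The plan is to sandwich $dim(G\boxtimes H)$ between matching upper and lower bounds, both of which are already available from results proved earlier in the paper. No new combinatorial argument is needed; the work is just to check that the hypotheses of those two results are exactly what Theorem \ref{Th-dim=n(n1-1)} assumes.

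For the upper bound, I would invoke Theorem \ref{resolvedGraphs}. Its hypotheses are that $H$ is self $k$-resolved of order $n_2$ and $D(G)<k$, which are precisely two of the hypotheses here. That theorem yields
\[
dim(G\boxtimes H)\le n_2\cdot dim(G).
\]
Substituting the assumption $dim(G)=n_1-t_1$ gives $dim(G\boxtimes H)\le n_2(n_1-t_1)$.

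For the lower bound, I would apply Corollary \ref{GeneralizCoroInferior}, whose hypotheses require only that $G$ be a nontrivial connected graph of order $n_1$ with $t_1$ true twin equivalence classes (the graph $H$ can be arbitrary of order $n_2$). These are again satisfied, so
\[
dim(G\boxtimes H)\ge n_2(n_1-t_1).
\]
Putting the two inequalities together yields the claimed equality $dim(G\boxtimes H)=n_2(n_1-t_1)$.

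There is really no serious obstacle: the theorem is just the meeting point of the upper bound (Theorem \ref{resolvedGraphs}) and the lower bound (Corollary \ref{GeneralizCoroInferior}) under the extra twin-saturation hypothesis $dim(G)=n_1-t_1$. The only thing to be careful about is to record that all four hypotheses ($H$ self $k$-resolved, $D(G)<k$, $G$ nontrivial connected with $t_1$ true twin classes, and $dim(G)=n_1-t_1$) are each used in exactly one of the two inequalities, so the combination is clean and no additional argument or case analysis is needed.
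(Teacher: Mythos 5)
Your proof is correct and is exactly the paper's argument: the paper introduces this theorem with the sentence ``Theorem \ref{resolvedGraphs} and Corollary \ref{GeneralizCoroInferior} lead to the following result,'' i.e., the upper bound $n_2\cdot dim(G)=n_2(n_1-t_1)$ from Theorem \ref{resolvedGraphs} meets the lower bound $n_2(n_1-t_1)$ from Corollary \ref{GeneralizCoroInferior}. Nothing further is needed.
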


\begin{lemma}\label{Lemma2resolved}
A nontrivial connected graph is self $2$-resolved if and only if it does not have true twin vertices.
\end{lemma}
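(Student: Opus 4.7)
The plan is to prove both implications directly from the definitions, using the characterization of true twins via closed neighborhoods (and therefore via distances to third vertices).

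For the forward direction I will argue by contrapositive: assume $x$ and $y$ are true twins, and show the pair $(x,y)$ admits no witness $w$. Since $y\in N_G[y]=N_G[x]$ and $x\ne y$, the edge $xy$ exists, so $d_G(x,y)=1$; moreover, for every third vertex $w$ the twin property gives $d_G(x,w)=d_G(y,w)$. If some $w\notin\{x,y\}$ satisfied $x\in I[y,w]$, then $d_G(y,w)=d_G(y,x)+d_G(x,w)=1+d_G(y,w)$, impossible; by symmetry, $y\notin I[x,w]$ either. The only remaining candidates $w\in\{x,y\}$ force $d_G(x,w)\le 1$ or $d_G(y,w)\le 1$, violating the $\ge k=2$ requirement. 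Hence $G$ is not self $2$-resolved.

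For the reverse direction, assume $G$ has no true twins and fix distinct vertices $x,y$. I split on the distance. If $d_G(x,y)\ge 2$, simply take $w=y$: then $d_G(x,w)\ge 2$ and trivially $y\in I[x,w]$. If $d_G(x,y)=1$, then $N_G[x]\ne N_G[y]$; since $\{x,y\}\subseteq N_G[x]\cap N_G[y]$, the symmetric difference $N_G[x]\triangle N_G[y]$ contains some vertex $w\notin\{x,y\}$, which we may assume lies in $N_G[x]\setminus N_G[y]$. Then $w$ is adjacent to $x$ and not to $y$, so $d_G(y,w)=2$ and $x\in I[y,w]$, giving the required witness.

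The main (mild) obstacle is bookkeeping in the forward direction: one must check that neither third-vertex choices nor the degenerate choices $w\in\{x,y\}$ can serve as a witness, and in the reverse direction one must verify that a vertex of $N_G[x]\triangle N_G[y]$ can be chosen outside $\{x,y\}$; this works precisely because $x$ and $y$ belong to both closed neighborhoods whenever they are adjacent.
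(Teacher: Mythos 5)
Your proof is correct and follows essentially the same route as the paper's: the sufficiency argument (no twins $\Rightarrow$ self $2$-resolved) is virtually identical, splitting on $d_G(x,y)\ge 2$ versus adjacency and producing a neighbor of one vertex outside the closed neighborhood of the other. For necessity the paper argues directly (a witness $w$ at distance $\ge 2$ yields a vertex $u_2$ in $N_G[y]\setminus N_G[x]$) while you argue the contrapositive (twins admit no witness), but these are the same underlying observation; your version is a bit more careful about the degenerate choices $w\in\{x,y\}$.
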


\begin{proof} Necessity. Let $G$ be a  $2$-resolved graph. Let $x$ and $y$ be two adjacent vertices in $G$. Without loss of generality, we take $w\in V(G)$ such that $2\le k=d_G(x,w)$ and $y\in I[x,w]$. So, there exists a shortest path $x,y,u_2,...,u_{k-1},w$ from $x$ to $w$ and, as a consequence, $u_2\in N_G[y]$ and $u_2\not\in N_G[x]$. Therefore, $G$ does not have true twin vertices.

Sufficiency. If for every  $u,v\in V(G)$, $N_G[u]\ne N_G[v]$, then for each pair of adjacent vertices $x$ and $y$, there exists $w\in V(G)-\{x,y\}$ such that $d_G(x,w)=2$ and $y\in I[x,w]$ or $d_G(y,w)=2$ and $x\in I[y,w]$. On the other hand, if $d_G(u,v)\ge 2$, then for $w=u$ we have $d_G(v,w)\ge 2$ and $u\in I[v,w]$. Therefore, $G$ is self $2$-resolved.
\end{proof}

By Lemma \ref{Lemma2resolved} we deduce the following consequence of Theorem \ref{Th-dim=n(n1-1)}.
\begin{corollary} Let $H$ be a connected graph  of order $n_2\ge 3$.
 If $H$  does not have  true twin vertices and  $n_1\ge 2$, then
$dim(K_{n_1}\boxtimes H)= n_2(n_1-1).$
\end{corollary}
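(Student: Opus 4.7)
The plan is to read this corollary as a direct specialization of Theorem \ref{Th-dim=n(n1-1)}, using Lemma \ref{Lemma2resolved} to supply the "self $k$-resolved" hypothesis with the smallest nontrivial value $k=2$. So the strategy is: pick $k=2$, set $G=K_{n_1}$, check the four conditions of Theorem \ref{Th-dim=n(n1-1)} one by one, and then read off the equality.

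First, Lemma \ref{Lemma2resolved} applies: $H$ is nontrivial (since $n_2\ge 3$), connected, and has no true twin vertices, hence $H$ is self $2$-resolved. Second, I need $G=K_{n_1}$ to be a nontrivial connected graph, which is immediate from $n_1\ge 2$. Third, I need $D(G)<k=2$; since $D(K_{n_1})=1$, this holds. Fourth, letting $t_1$ denote the number of true twin equivalence classes of $K_{n_1}$, all vertices of $K_{n_1}$ share the closed neighborhood $V(K_{n_1})$, so $t_1=1$, and the well-known identity $dim(K_{n_1})=n_1-1$ gives $dim(G)=n_1-t_1$, matching the remaining hypothesis of Theorem \ref{Th-dim=n(n1-1)}.

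Applying Theorem \ref{Th-dim=n(n1-1)} then yields
\[
dim(K_{n_1}\boxtimes H)=n_2(n_1-t_1)=n_2(n_1-1),
\]
which is exactly the claim. There is no real obstacle: the corollary is essentially a bookkeeping exercise that exhibits $K_{n_1}$ as the extremal case $t_1=1$, $dim(G)=n_1-1$ of Theorem \ref{Th-dim=n(n1-1)} combined with Lemma \ref{Lemma2resolved}. The only subtle point worth acknowledging is the need for $n_2\ge 3$: for $n_2=2$ the only connected option is $K_2$, which does have true twins, so the hypothesis "no true twin vertices" would be incompatible with $n_2=2$ and the statement is correctly restricted to $n_2\ge 3$.
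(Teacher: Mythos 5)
Your proof is correct and follows exactly the route the paper intends: the corollary is stated as an immediate consequence of Lemma \ref{Lemma2resolved} (giving that $H$ is self $2$-resolved) plugged into Theorem \ref{Th-dim=n(n1-1)} with $G=K_{n_1}$, $t_1=1$, $D(K_{n_1})=1<2$, and $dim(K_{n_1})=n_1-1$. Your verification of the hypotheses, including the remark about why $n_2\ge 3$ is needed, matches the paper's (unwritten) argument.
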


The following   remark emphasizes some particular cases of the above result.
\begin{remark}\label{completo-camino-ciclo} Let $n_1\ge 2$ be an integer.
\begin{itemize}
\item  For any tree $T$ of order $n_2\ge 3$,
$dim(K_{n_1} \boxtimes T)= n_2(n_1-1).$

\item  For any $n_2\ge 4$,
$dim(K_{n_1} \boxtimes C_{n_2})= n_2(n_1-1).$

\item  For any hypercube $Q_{r}=\underbrace{K_2\square \cdots \square K_2}_{r}$, $r\ge 2$,
$dim(K_{n_1} \boxtimes Q_{r})= 2^r(n_1-1).$

\item For any integers $m,n\ge 2$,
$dim(K_{n_1} \boxtimes (P_{n}\Box P_{m}))= n\cdot m\cdot (n_1-1).$
\end{itemize}
\end{remark}

Now we proceed to study  the strong product of path graphs.

\begin{theorem}
For any integers $n_1$ and $n_2$ such that $2\le n_1< n_2$,
$$\left\lfloor\frac{n_1+n_2-2}{n_1-1}\right\rfloor\le dim(P_{n_1} \boxtimes P_{n_2})\le  \left\lceil\frac{n_1+n_2-2}{n_1-1}\right\rceil.$$
\end{theorem}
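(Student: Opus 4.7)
The plan is to establish the upper and lower bounds by separate arguments.

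For the upper bound, set $q = \lceil (n_1+n_2-2)/(n_1-1) \rceil$ and take $S = \{s_1,\dots,s_q\}$ arranged in a zigzag along the first and last rows: $s_t = (a_t, b_t)$ with $a_t = 1$ for odd $t$ and $a_t = n_1$ for even $t$, and column coordinates $b_1 = 1$, $b_q = n_2$, and $b_{t+1} - b_t = n_1 - 1$ for $1 \le t \le q-2$. To verify that $S$ is a metric generator, consider distinct vertices $u = (i, j)$, $v = (i', j')$ and use $d_{G \boxtimes H}((a,b),(c,d)) = \max\{|a-c|,|b-d|\}$. If $j = j'$ and $i < i'$, let $d_1$ (resp.\ $d_2$) denote the column-distance from $j$ to the nearest landmark with first coordinate $1$ (resp.\ $n_1$). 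Since consecutive landmarks alternate in type with column-gap at most $n_1-1$, one has $d_1 + d_2 \le n_1 - 1$, whereas $(i'-1) + (n_1-i) \ge n_1$, so by pigeonhole $d_1 < i'-1$ or $d_2 < n_1-i$, and the corresponding nearest landmark distinguishes the pair. If $j \ne j'$ and one of the extreme landmarks $s_1, s_q$ has column at distance at least $n_1-1$ from both $j$ and $j'$, then both distances reduce to the column-distances to $b_t$, which differ because $j \ne j'$.

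For the lower bound, I use a double-counting argument on
\[
\mathcal{P} = \{\{(i_0, j_0), (i_0+1, j_0)\} : 1 \le i_0 \le n_1-1,\ 1 \le j_0 \le n_2\},
\]
a set of $(n_1-1)n_2$ vertically adjacent pairs. A landmark $s = (x,y)$ distinguishes $\{(i_0, j_0), (i_0+1, j_0)\}$ if and only if $|j_0 - y| < m(x, i_0) := \max\{|i_0 - x|,\ |i_0+1-x|\}$, contributing at most $2m(x,i_0)-1$ values of $j_0$ for each $i_0$. Splitting the sum over $i_0$ at $i_0 = x$ yields that each landmark distinguishes at most $(x-1)^2 + (n_1-x)^2 \le (n_1-1)^2$ pairs in $\mathcal{P}$. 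Since every pair in $\mathcal{P}$ must be distinguished, $|S|(n_1-1)^2 \ge (n_1-1)n_2$, hence $|S| \ge n_2/(n_1-1)$, and integrality gives $|S| \ge \lceil n_2/(n_1-1) \rceil = \lfloor (n_1+n_2-2)/(n_1-1) \rfloor$.

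The main obstacle is handling the remaining case in the upper-bound argument: pairs with $j \ne j'$ where $\min(j, j') < n_1$ and $\max(j, j') > n_2 - n_1 + 1$, so that neither $s_1$ nor $s_q$ is far from both columns. In this edge configuration one must use an intermediate landmark $s_t$ between the two columns and argue, exploiting the zigzag structure, that the max in the distance formula is forced to different values at $u$ and $v$, or reduce to a same-column subcase at a nearby column.
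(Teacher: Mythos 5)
Your lower bound is complete and correct, and it takes a genuinely different (and arguably cleaner) route than the paper: you double-count over all $(n_1-1)n_2$ vertically adjacent pairs and bound the number that a single landmark $(x,y)$ can separate by $(x-1)^2+(n_1-x)^2\le (n_1-1)^2$, whereas the paper restricts attention to the $2n_2$ such pairs lying in the two extreme rows and bounds each landmark's contribution there by $2n_1-3$; both give $|S|\ge\lceil n_2/(n_1-1)\rceil=\lfloor (n_1+n_2-2)/(n_1-1)\rfloor$. Your same-column case in the upper bound (the pigeonhole $d_1+d_2\le n_1-1<(i'-1)+(n_1-i)$ applied to the nearest row-$1$ and row-$n_1$ landmarks) is also correct, and is tidier than the paper's corresponding Case 1.

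The genuine gap is precisely the case you flag at the end: $j\ne j'$ with $\min(j,j')<n_1$ and $\max(j,j')>n_2-n_1+1$, so that neither extreme landmark is column-far from both vertices. This is not a routine leftover --- it is where the paper does most of its work --- and your sketch does not obviously close it. The ``reduce to a same-column subcase'' idea cannot work as stated, since your same-column pigeonhole relies on $i\ne i'$; and an intermediate landmark $s_t$ whose column lies between $j$ and $j'$ sees the two vertices on opposite sides, so $|j-b_t|=|j'-b_t|$ is entirely possible and the maxima need not be forced apart. The paper resolves this by splitting on whether $\max(j,j')\ge n_1$: if so, the landmark $(u_1,v_1)$ essentially suffices because the larger column distance dominates every row distance; if both columns are below $n_1$, assuming that $(u_1,v_1)$ and $(u_{n_1},v_{n_1})$ both fail forces the row coordinates to be ordered oppositely to the columns, after which the row-$1$ landmark in column $n_2$ (or in column $2n_1-1$) separates the pair. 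Until you supply an argument of this kind for the remaining configuration, the upper bound is not proved.
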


\begin{proof}
Let $V_1=\{u_1, u_2,...,u_{n_1}\}$ and $V_2=\{v_1, v_2,...,v_{n_2}\}$ be the set of vertices of $P_{n_1} $ and $ P_{n_2}$, respectively. With the above notation we suppose that two consecutive vertices of $V_i$ are adjacent, $i\in \{1,2\}$.

Let $\alpha=\left\lceil \frac{n_2-1}{n_1-1} \right\rceil -1$. We define the set $S$ of cardinality $\left\lceil\frac{n_1+n_2-2}{n_1-1}\right\rceil$ as follows:
$$S=\{(u_1,v_1), (u_{n_1},v_{n_1}),(u_1,v_{2(n_1-1)+1}),(u_{n_1},v_{3(n_1-1)+1}),...,(u_{1},v_{\alpha(n_1-1)+1}),(u_{n_1},v_{n_2})\}$$ if  $\left\lceil \frac{n_2-1}{n_1-1} \right\rceil$ is odd, and
$$S=\{(u_1,v_1), (u_{n_1},v_{n_1}),(u_1,v_{2(n_1-1)+1}),(u_{n_1},v_{3(n_1-1)+1}),...,(u_{n_1},v_{\alpha(n_1-1)+1}),(u_{_1},v_{n_2})\}$$ if $\left\lceil \frac{n_2-1}{n_1-1} \right\rceil$ is even. We will show that $S$ is a metric generator for $P_{n_1} \boxtimes P_{n_2}$. Let $(u_i,v_j),(u_k,v_l)$ be two different vertices of $P_{n_1} \boxtimes P_{n_2}$. We differentiate two cases.

Case 1. $j=l$. We suppose, without loss of generality, that $i<k$.  If $j\in \{1,...,n_1\}$ and $d_{P_{n_1} \boxtimes P_{n_2}}((u_i,v_j),(u_{n_1},v_{n_1}))=d_{P_{n_1} \boxtimes P_{n_2}}((u_k,v_j),(u_{n_1},v_{n_1}))$, then from $\max\{n_1-i,n_1-j\}=\max\{n_1-k,n_1-j\}$ we have $n_1-j\ge n_1-i>n_1-k$. Hence, $j<k$ and, as a consequence,
\begin{align*}
d_{P_{n_1} \boxtimes P_{n_2}}((u_i,v_j),(u_{1},v_{1}))&=\max\{i-1,j-1\}\\
&<k-1\\
&=\max\{k-1,j-1\}\\
&=d_{P_{n_1} \boxtimes P_{n_2}}((u_k,v_j),(u_{1},v_{1})).
\end{align*}
Thus, if $j\in \{1,...,n_1\}$, then we deduce $r((u_i,u_j)|S)\ne r((u_k,u_j)|S)$.

An analogous procedure can be used to show that for $$j\in \{t(n_1-1)+1,...,(t+1)(n_1-1)+1\},$$ where $t\in \{1,..,\alpha-1\}$, and for $j\in \{\alpha(n_1-1)+1,...,n_2\}$, it follows $r((u_i,u_j)|S)\ne r((u_k,u_j)|S)$.

Case 2. $j\ne l$. We suppose, without loss of generality, that $j<l$ and we differentiate two subcases.

Subcase 2.1. $l<n_1$. Since $(u_1,v_1),(u_{n_1},v_{n_1})\in S$, we only must consider the case when $$d_{P_{n_1} \boxtimes P_{n_2}}((u_i,v_j),(u_1,v_1))=d_{P_{n_1} \boxtimes P_{n_2}}((u_k,v_l),(u_1,v_1))$$ and $$d_{P_{n_1} \boxtimes P_{n_2}}((u_i,v_j),(u_{n_1},v_{n_1}))=d_{P_{n_1} \boxtimes P_{n_2}}((u_k,v_l),(u_{n_1},v_{n_1})).$$ In such a situation, since $j<l$, we have $k<i$. Hence,
\begin{align*}
d_{P_{n_1} \boxtimes P_{n_2}}((u_i,v_j),(u_{1},v_{n_2})&=\max \{i-1,n_2-j\}\\
&>\max\{k-1,n_2-l\}\\
&=d_{P_{n_1} \boxtimes P_{n_2}}((u_k,v_l),(u_{1},v_{n_2}).
\end{align*}
So, if $(u_1,v_{n_2})\in S$, then $r((u_i,v_j)|S)\ne r((u_k,v_l)|S)$. Moreover, if $(u_1,v_{n_2})\not\in S$, then $(u_1,v_{2n_1-1})\in S$. Hence, from
\begin{align*}d_{P_{n_1} \boxtimes P_{n_2}}((u_i,v_j),(u_{1},v_{2n_1-1})&=\max \{i-1,2n_1-1-j\}\\
&=2n_1-1-j\\
&>2n_1-1-l\\
&=\max\{k-1,2n_1-1-l\}\\
&=d_{P_{n_1} \boxtimes P_{n_2}}((u_k,v_l),(u_{1},v_{2n_1-1}),
\end{align*}
we have $r((u_i,v_j)|S)\ne r((u_k,v_l)|S)$.

Subcase 2.2. $l\ge n_1$. We have,
\begin{align*}d_{P_{n_1} \boxtimes P_{n_2}}((u_k,v_l),(u_1,v_1))&=\max \{k-1,l-1\}\\
&=l-1\\
&>\max \{i-1,j-1\}\\
&=d_{P_{n_1} \boxtimes P_{n_2}}((u_i,v_j),(u_1,v_1)).
\end{align*}
Thus, in this case  $r((u_i,v_j)|S)\ne r((u_k,v_l)|S)$ as well.

We conclude that $S$ is a metric generator for $P_{n_1} \boxtimes P_{n_2}$ and, as a consequence, the upper bound follows.

We will show that $dim(P_{n_1} \boxtimes P_{n_2})\ge \left\lfloor\frac{n_1+n_2-2}{n_1-1}\right\rfloor$ by contradiction. Let $n_2-1=x(n_1-1)+y$, where $n_1-1> y\ge 0 $. Now we suppose that there exists a metric generator for $P_{n_1} \boxtimes P_{n_2}$, say $S'$, of cardinality $x$. Note that a vertex $(u_r,v_t)\in S'$ distinguishes two vertices $(u_1,v_j), (u_2,v_j)$ if and only if  $|t-j|<r-1$. Analogously, a vertex $(u_r,v_t)\in S'$ distinguishes two vertices $(u_{n_1-1},v_{j'}), (u_{n_1},v_{j'})$ if and only if  $|t-j'|<n_1-r$. Hence, a vertex of $(u_r,v_t)\in S'$ distinguishes, at most, $2n_1-3$ pairs of vertices of the form $(u_1,v_j), (u_2,v_j)$ or
$(u_{n_1-1},v_{j'}), (u_{n_1},v_{j'})$. Thus, if $S'$ is a metric generator, then $2n_2-x\le (2n_1-3)x$  and, as a consequence, $n_2-1\le x(n_1-1)-1$, a contradiction.
\end{proof}


\begin{conjecture}
For any integers $n_1$ and $n_2$ such that $2\le n_1< n_2$,
 $$ dim(P_{n_1} \boxtimes P_{n_2}) =  \left\lceil\frac{n_1+n_2-2}{n_1-1}\right\rceil.$$
\end{conjecture}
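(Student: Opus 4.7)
Write $n_2-1=x(n_1-1)+y$ with $0\le y<n_1-1$. When $y=0$, the floor and ceiling in the preceding theorem coincide at $x+1$, so the conjectured equality follows immediately; the interesting case is $y>0$, where the upper bound is $x+2$ and one only needs to improve the lower bound from $x+1$ to $x+2$. The plan is to assume for contradiction that $S$ is a metric generator of $P_{n_1}\boxtimes P_{n_2}$ with $|S|=x+1$ and to extract from $S$ a pair of vertices it fails to distinguish.

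The first step is to sharpen the extremal-pair counting used in the preceding proof. A vertex $(u_r,v_t)\in S$ distinguishes the row pair $A_j=\{(u_1,v_j),(u_2,v_j)\}$ on an interval of columns of length at most $2r-3$ (or the singleton $\{t\}$ when $r\in\{1,2\}$), and symmetrically distinguishes $B_j=\{(u_{n_1-1},v_j),(u_{n_1},v_j)\}$ on an interval of length at most $2(n_1-r)-1$; the combined length is at most $2(n_1-1)$, with equality only when $r\in\{1,n_1\}$. Writing $a$, $b$, $c$ for the numbers of vertices of $S$ in rows $1$, $n_1$, and $\{2,\dots,n_1-1\}$ respectively, the demand that the $A$-intervals and the $B$-intervals each cover $[1,n_2]$ forces $c\le n_1-y-2$, which when $y=n_1-2$ pins $c=0$ with $a,b\ge 1$.

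The second step supplements this counting, which is essentially tight when $|S|=x+1$, with pairs the row analysis does not see: the column analogues $A'_i=\{(u_i,v_1),(u_i,v_2)\}$, $B'_i=\{(u_i,v_{n_2-1}),(u_i,v_{n_2})\}$, and the long-axis pairs such as $\{(u_1,v_1),(u_1,v_{n_2})\}$. A direct computation with the distance formula for the strong product shows that the long-axis pair is distinguished by $(u_r,v_t)$ only when $r-1<\max(t-1,n_2-t)$ and $t\ne(n_2+1)/2$. Small cases make this extra ingredient indispensable: for $n_1=4$, $n_2=5$ the set $\{(u_1,v_3),(u_4,v_3)\}$ resolves every row and column pair, yet does not separate $(u_1,v_1)$ from $(u_1,v_5)$, so a purely row/column argument admits spurious ``generators'' of size $x+1$.

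The conclusion would follow from a case analysis over the admissible row- and column-distributions $(a,b,c)$ with $a+b+c=x+1$: for each such distribution, one would exhibit a concrete pair that no placement of $x+1$ vertices of that type can resolve. The main obstacle, and the real content of the conjecture, is that the aggregate inequality from Step 1 is tight at $x+1$ throughout the range $0<y<n_1-2$, so the contradiction cannot be extracted from cleaner counting; it must come from the geometric constraint that the $A$-, $B$-, $A'$-, and $B'$-intervals are all centred at the same column as their defining vertex, and handling this uniformly in $n_1$, $n_2$, and $y$ is the technical heart of the argument.
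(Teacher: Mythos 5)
This statement is labelled a \emph{conjecture} in the paper: the authors prove only the two-sided bound $\left\lfloor\frac{n_1+n_2-2}{n_1-1}\right\rfloor\le dim(P_{n_1}\boxtimes P_{n_2})\le\left\lceil\frac{n_1+n_2-2}{n_1-1}\right\rceil$ in the preceding theorem and explicitly leave the equality open, so there is no proof in the paper to measure your argument against. Your reduction is correct as far as it goes: writing $n_2-1=x(n_1-1)+y$, the case $y=0$ is settled by the theorem, and for $y>0$ the task is precisely to rule out a metric generator of cardinality $x+1$. Your observation that the row/column interval counting cannot suffice on its own is also correct, and your example for $n_1=4$, $n_2=5$ genuinely checks out: $\{(u_1,v_3),(u_4,v_3)\}$ resolves all pairs of the forms $A_j$, $B_j$, $A'_i$, $B'_i$ yet fails on $(u_1,v_1)$ versus $(u_1,v_5)$.

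However, what you have written is not a proof; it is a plan whose decisive step is missing, and you concede as much when you say the conclusion ``would follow from a case analysis'' and that handling the geometric constraint ``uniformly in $n_1$, $n_2$, and $y$ is the technical heart of the argument.'' That heart is never supplied: no case of the $(a,b,c)$ analysis is carried out, the inequality $c\le n_1-y-2$ is asserted without derivation, and no unresolved pair is exhibited for any distribution. There are also small numerical slips that would need repair in a genuine write-up: for $r=1$ the $A$-interval is empty rather than a singleton, and the combined count of $A$- and $B$-pairs distinguished by one vertex is at most $2n_1-3$ (attained at $r\in\{1,n_1\}$, and $2n_1-4$ otherwise), not $2(n_1-1)$. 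In short, you have correctly diagnosed why the problem is hard and where the paper's counting argument stops, but the statement remains unproved by your attempt, just as it remains unproved in the paper.
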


\begin{theorem}
For any  integer $n\ge 2$, $dim(P_{n} \boxtimes P_{n})=3$.
\end{theorem}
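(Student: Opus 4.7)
The plan is to establish $dim(P_n \boxtimes P_n) \le 3$ by exhibiting a resolving set of size three, and $dim(P_n \boxtimes P_n) \ge 3$ by a pigeonhole argument.

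For the upper bound, I would take the three corners $S = \{(u_1, v_1), (u_1, v_n), (u_n, v_1)\}$ and use Remark~1 to write the representation of $(u_i, v_j)$ as $\bigl(\max\{i-1, j-1\},\ \max\{i-1, n-j\},\ \max\{n-i, j-1\}\bigr)$. Suppose $(u_i, v_j) \neq (u_{i'}, v_{j'})$ have equal representations; I would case-analyze on the relative positions. The subcase $i < i'$ with $j > j'$ is dispatched immediately, since both arguments of the second ``$\max$'' are strictly smaller on the left, so the second coordinates cannot agree. The ``diagonal'' subcases $i = i'$ or $j = j'$ are handled symmetrically: equating the second and third coordinates forces $j = j'$ or $i = i'$ respectively, by monotonicity of the relevant max-argument in the remaining coordinate. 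The remaining subcase $i < i'$, $j < j'$ is the most delicate: equating the third coordinates pins down $j' = n - i + 1$ (together with $i + j \le n + 1$); equating the second coordinates then pins down $j = n - i' + 1$; finally, substituting these into the first coordinate forces $i = i'$, a contradiction.

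For the lower bound, I would take an arbitrary candidate $\{s_1, s_2\} \subseteq V(P_n \boxtimes P_n)$ and set $E_k = \operatorname{ecc}(s_k) \le n - 1$ and $D = d(s_1, s_2) \ge 1$. For every vertex $v$, the representation $(d(v, s_1), d(v, s_2))$ belongs to
\[
T \;=\; \{(d_1, d_2) \in \mathbb{Z}_{\ge 0}^2 : d_1 \le E_1,\ d_2 \le E_2,\ d_1 + d_2 \ge D\},
\]
the last constraint coming from the triangle inequality. Since $D \le E_k$, a direct count gives $|T| = (E_1+1)(E_2+1) - D(D+1)/2 \le n^2 - 1 < n^2 = |V(P_n \boxtimes P_n)|$. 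Hence by pigeonhole some two distinct vertices share a representation, so $\{s_1, s_2\}$ is not a metric generator.

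The main obstacle is the case analysis for the upper bound: although each subcase reduces to elementary manipulations of the three ``$\max$'' equations, keeping track of which argument each max attains requires care, particularly in the $i < i'$, $j < j'$ branch. The lower bound, by contrast, is essentially one line once the triangle-inequality restriction of $T$ is observed.
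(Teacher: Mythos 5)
Your proposal is correct and follows essentially the same route as the paper: a resolving set consisting of three corner vertices (your choice of corners is equivalent to the paper's under a symmetry of $P_n\boxtimes P_n$), together with a pigeonhole count showing that two vertices admit at most $n^2-1$ distinct representations. The paper's lower-bound count simply excludes the vector $(0,0)$ rather than invoking the triangle inequality, but the argument is the same.
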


\begin{proof}
Let $V=\{v_1,v_2,...,v_n\}$ be the set of vertices of $P_n$. Now, with the above notation,  we suppose that two consecutive vertices of $V$ are adjacent. We will  show that
$$S'=\{(u_1,v_1),(u_n,v_1),(u_n,v_n)\}$$
is a metric generator for $P_{n} \boxtimes P_{n}$. Let $(u_i,v_j),(u_k,v_l)$ be two different vertices of $P_{n} \boxtimes P_{n}$. We only must consider the case when $d_{P_{n} \boxtimes P_{n}}((u_i,v_j),(u_1,v_1))=d_{P_{n} \boxtimes P_{n}}((u_k,v_l),(u_1,v_1))$ and $d_{P_{n} \boxtimes P_{n}}((u_i,v_j),(u_{n},v_{n}))=d_{P_{n} \boxtimes P_{n}}((u_k,v_l),(u_{n},v_{n}))$. In such a case, if $j<l$, then  $k<i$ and, as a consequence,
\begin{align*}
d_{P_{n} \boxtimes P_{n}}((u_i,v_j),(u_{n},v_{1})&=\max \{n-i,j-1\}\\
&<\max\{n-k,l-1\}\\
&=d_{P_{n} \boxtimes P_{n}}((u_k,v_l),(u_{n},v_{1})).
\end{align*}
Analogously, if $j>l$, then we have $d_{P_{n} \boxtimes P_{n}}((u_i,v_j),(u_{n},v_{1})>d_{P_{n} \boxtimes P_{n}}((u_k,v_l),(u_{n},v_{1})).$
We conclude that $S'$ is a metric generator for $P_{n} \boxtimes P_{n}$ and, as a consequence, $dim(P_{n} \boxtimes P_{n})\le 3$. In order to show that $dim(P_{n} \boxtimes P_{n})\ge 3$, we suppose that there exists a metric generator for $P_{n} \boxtimes P_{n}$ of cardinality two. Since $(0,0)$ is not a possible distance vector, and the diameter of $P_{n} \boxtimes P_{n}$ is $n-1$, there are  $n^2-1$ possible distance vectors, but the order of  $P_{n} \boxtimes P_{n}$ is $n^2$, a contradiction. So, $dim(P_{n} \boxtimes P_{n})\ge 3$ and, as a consequence, $dim(P_{n} \boxtimes P_{n})= 3$.
\end{proof}

The following claim will be useful in the proof of Theorem \ref{caminociclon_2}.
\begin{claim} \label{distancescycles}
Let $C$ be a cycle graph. If $x,y,u$ and $v$ are vertices of $C$ such that $x$ and $y$ are adjacent and $d_C(u,x)=d_C(v,x)$, then $d_C(u,y)\neq d_C(v,y)$.
\end{claim}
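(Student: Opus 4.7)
The plan is to exploit the cyclic structure of $C$ directly, since the claim is really about how the two-element symmetry of distance-spheres in a cycle interacts with an adjacent move. First, note that the case $u=v$ trivially contradicts the conclusion, so the statement is implicitly about distinct $u,v$, and I will proceed under that hypothesis. Let $n=|V(C)|$, and label the vertices cyclically as $0,1,\ldots,n-1$ with $x=0$ and $y=1$; this is legitimate because $x$ and $y$ are adjacent.

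The central structural input is that in any cycle $C_n$ the distance-sphere about a vertex $z$ at radius $k$ equals $\{z+k,\,z-k\}\pmod{n}$ and has cardinality at most $2$, with cardinality exactly $2$ precisely when $0<k<n/2$. Setting $k:=d_C(u,x)=d_C(v,x)$, the distinctness of $u$ and $v$ therefore forces $1\le k<n/2$ and $\{u,v\}=\{k,\,n-k\}$; by symmetry I may assume $u=k$ and $v=n-k$.

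With these coordinates the remainder of the argument is a short computation. One has $d_C(u,y)=d_C(k,1)=k-1$ (using $k\le n/2$), while $d_C(v,y)=d_C(n-k,1)=\min\{k+1,\,n-k-1\}$. Since $n-k-1\ge k$ for every integer $k<n/2$, this minimum always lies in $\{k,\,k+1\}$, so in every case it strictly exceeds $k-1$ and the conclusion $d_C(u,y)\neq d_C(v,y)$ follows. The only place where one needs to pay attention is the parity split deciding whether $k+1$ or $n-k-1$ realises the minimum; the uniform bound $n-k-1\ge k$ (with equality exactly when $n$ is odd and $k=(n-1)/2$) absorbs both cases, and verifying this inequality is the main point of the proof.
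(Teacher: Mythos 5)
Your proof is correct, but there is nothing in the paper to compare it against: Claim \ref{distancescycles} is stated without any proof, evidently being regarded as obvious. Your argument supplies the missing details cleanly. You are also right to flag the implicit hypothesis $u\neq v$ --- as literally stated the claim fails for $u=v$, but in its only application (the case $i=k$, $j<l$ in the proof of Theorem \ref{caminociclon_2}) the two vertices $v_j,v_l$ are indeed distinct, so your reading is the intended one. The key steps all check out: with $x=0$, $y=1$ and $k=d_C(u,x)=d_C(v,x)$, distinctness of $u,v$ forces $1\le k<n/2$ and $\{u,v\}=\{k,n-k\}$, whence $d_C(k,1)=k-1$ while $d_C(n-k,1)=\min\{k+1,\,n-k-1\}\ge k$ because $2k\le n-1$; the strict gap of at least one settles the claim. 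A marginally slicker way to phrase the same computation is via the triangle inequality on the edge $xy$, which gives $|d_C(w,y)-d_C(w,x)|\le 1$ for every $w$, so one only has to rule out $d_C(u,y)=d_C(v,y)\in\{k-1,k,k+1\}$; but your coordinate computation does this explicitly and is perfectly adequate.
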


\begin{theorem}\label{caminociclon_2}
For any integers $n_1$ and $n_2$ such that $\displaystyle\frac{n_1-1}{2}\ge \left\lfloor \frac{n_2}{2} \right\rfloor \ge 2$, $$dim(P_{n_1}\boxtimes C_{n_2})\le n_1.$$
\end{theorem}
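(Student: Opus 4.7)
The plan is to exhibit an explicit metric generator of cardinality $n_1$ by placing one landmark in each row $\{u_i\}\times V(C_{n_2})$. Label $V(P_{n_1})=\{u_1,\dots,u_{n_1}\}$ with consecutive indices adjacent, and $V(C_{n_2})=\{v_1,\dots,v_{n_2}\}$ cyclically with consecutive indices adjacent, and write $m=\lfloor n_2/2\rfloor$ (the diameter of $C_{n_2}$). The hypothesis reads $n_1-1\ge 2m$, forcing $n_1\ge n_2$. I would set
$$
\sigma(i)=((i-1)\bmod n_2)+1\quad(1\le i\le n_1),\qquad S=\{(u_i,v_{\sigma(i)}):1\le i\le n_1\},
$$
so that $|S|=n_1$, and show that every pair of distinct vertices is resolved by some element of $S$ via the opening Remark $d_{G\boxtimes H}((a,b),(c,d))=\max\{d_G(a,c),d_H(b,d)\}$.

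Taking distinct $(u_i,v_j),(u_k,v_l)$, I look for a landmark $(u_r,v_{\sigma(r)})\in S$ with $\max\{|i-r|,d_C(v_j,v_{\sigma(r)})\}\ne \max\{|k-r|,d_C(v_l,v_{\sigma(r)})\}$, and split into the cases $i=k$ and $i\ne k$.

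If $i=k$ (so $j\ne l$), the path components cancel, so I need the cycle components to differ and at least one to strictly exceed $|i-r|$. By surjectivity of $\sigma$, some $r_0$ has $v_{\sigma(r_0)}=v_j$; if $|i-r_0|<d_C(v_j,v_l)$ the landmark $(u_{r_0},v_j)$ works. Otherwise I move to $r_0\pm 1$, whose cycle coordinate is a neighbor of $v_j$, and Claim \ref{distancescycles} guarantees it distinguishes $v_j$ from $v_l$; the inequality $n_1\ge 2m+1$ keeps $r_0\pm 1$ inside $[1,n_1]$ and within the needed path-distance of $i$. If $i\ne k$, say $i<k$, the path components already differ by $k-i$; I try a landmark at an extreme, $r=1$ or $r=n_1$, and the inequality $n_1-1\ge 2m$ forces the path components $\{i-1,k-1\}$ or $\{n_1-i,n_1-k\}$ to dominate the cycle components (which are $\le m$) at at least one extreme, so the corresponding maxes reduce to the (unequal) path distances. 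The residual subcases where neither extreme dominates are handled by a middle landmark, again using surjectivity of $\sigma$ and Claim \ref{distancescycles} to make the cycle components unequal.

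The main obstacle will be the $i=k$ subcase when $d_C(v_j,v_l)=m$ (cycle antipode) and $i$ sits near the centre of $P_{n_1}$, forcing $|i-r_0|\ge m$ for some preimage $r_0\in\sigma^{-1}(v_j)$; then one must jump to $r_0\pm 1$ to pick up a cycle-neighbor of $v_j$, and the tight inequality $m\le(n_1-1)/2$ is exactly what guarantees such an $r_0\pm 1$ lies in $[1,n_1]$ with $|i-r|$ strictly less than the resulting cycle distance. The same inequality is what makes the ``extreme'' choice work in Case 2, so the hypothesis is used at its full strength in both cases.
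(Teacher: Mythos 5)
Your choice of landmark set is exactly the paper's (the ``diagonal'' $(u_i,v_{\sigma(i)})$ with the cycle coordinate read modulo $n_2$), and your treatment of the case $i\ne k$ via the two extreme landmarks is essentially the paper's argument (in fact the two extremes already cover everything: if $k-1>\lfloor n_2/2\rfloor$ use $r=1$, otherwise $i<n_1-\lfloor n_2/2\rfloor$ and $r=n_1$ works, so there are no ``residual subcases''). The problem is the case $i=k$, where your fallback step misapplies Claim \ref{distancescycles}. That claim says: if $v_j$ and $v_l$ are \emph{equidistant} from $x$, then they are at different distances from any neighbour of $x$. You invoke it with $x=v_j$, but $v_j$ and $v_l$ are certainly not equidistant from $v_j$ (the distances are $0$ and $d_C(v_j,v_l)>0$), so the claim gives you nothing about the neighbours of $v_j$ --- and indeed a cycle-neighbour of $v_j$ need not separate $v_j$ from $v_l$. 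Concretely, in $C_4$ with $v_j=v_2$ and $v_l=v_4$, both neighbours $v_1,v_3$ of $v_2$ satisfy $d(v_1,v_2)=d(v_1,v_4)=1$ and $d(v_3,v_2)=d(v_3,v_4)=1$. Taking $n_1=5$, $n_2=4$, $i=k=5$: your $r_0=2$ fails since $|5-2|=3\ge 2=d_C(v_2,v_4)$, and the landmarks at $r_0\pm1$, namely $(u_1,v_1)$ and $(u_3,v_3)$, have equal cycle components for the two targets and path components ($4$ and $2$) that swamp everything, so your argument produces no distinguishing landmark even though the set does resolve this pair.

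The paper's fix is to pivot the Claim around the \emph{landmark's} cycle coordinate rather than around $v_j$: first test the same-row landmark $(u_i,v_{\sigma(i)})$, whose path component is $0$; if it fails, then $d_C(v_j,v_{\sigma(i)})=d_C(v_l,v_{\sigma(i)})$, which is precisely the hypothesis of Claim \ref{distancescycles}, so the adjacent row's landmark $(u_{i\pm1},v_{\sigma(i)\pm1})$ has unequal cycle components, and its path component is only $1$, which cannot mask the difference. Your Case 1 needs to be rewritten along these lines; as stated it is a genuine gap, not a presentational one.
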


\begin{proof}
Let $V_1=\{u_0,u_1,...,u_{n_1-1}\}$ and $V_2=\{v_0,v_1,...,v_{n_2-1}\}$ be the set of vertices of $P_{n_1}$ and $C_{n_2}$, respectively. Here we suppose that  $v_0$ and $v_{n_2-1}$ are adjacent vertices in $C_{n_2}$ and, with the above notation, two consecutive vertices of $V_i$ are adjacent, $i\in \{1,2\}$. Let $S$ be the set of vertices of $P_{n_1}\boxtimes C_{n_2}$ of the form $(u_i,v_i)$, where the subscript of the second component is taken modulo $n_2$. We will show that $S$ is a metric generator for $P_{n_1}\boxtimes C_{n_2}$. To begin with, we consider two different vertices $(u_i,v_j)$ and $(u_k,v_l)$  of $P_{n_1}\boxtimes C_{n_2}$. First we consider the case $i=k$ and we suppose, without loss of generality, that $j<l$. Now, if
$d_{P_{n_1}\boxtimes C_{n_2}}((u_i,v_j),(u_i,v_i))=d_{P_{n_1}\boxtimes C_{n_2}}((u_i,v_l),(u_i,v_i))$, then $d_{C_{n_2}}(v_j,v_i)=d_{C_{n_2}}(v_l,v_i)$. So, for $i=0$, Claim \ref{distancescycles} leads to
\begin{align*}d_{P_{n_1}\boxtimes C_{n_2}}((u_0,v_j),(u_1,v_1))&=\max\{1,d_{C_{n_2}}(v_j,v_1)\}\\
&\ne \max\{1,d_{C_{n_2}}(v_l,v_1)\}\\
&=d_{P_{n_1}\boxtimes C_{n_2}}((u_0,v_l),(u_1,v_1)).
\end{align*}
Analogously, for $i\ne 0$, Claim \ref{distancescycles} leads to
\begin{align*}d_{P_{n_1}\boxtimes C_{n_2}}((u_i,v_j),(u_{i-1},v_{i-1}))&=\max\{1,d_{C_{n_2}}(v_j,v_{i-1})\}\\
&\ne \max\{1,d_{C_{n_2}}(v_l,v_{i-1})\}\\
&=d_{P_{n_1}\boxtimes C_{n_2}}((u_i,v_l),(u_{i-1},v_{i-1})).
\end{align*}
Hence, $r((u_i,v_j)|S)\ne r((u_i,v_l)|S)$. Now we consider the case $i\ne k$. We suppose, without loss of generality, that $i<k$. If $k\le \left\lfloor \frac{n_2}{2} \right\rfloor$, then $n_1-1-i>\left\lfloor \frac{n_2}{2} \right\rfloor=D(C_{n_2})$. Thus,
\begin{align*}d_{P_{n_1}\boxtimes C_{n_2}}((u_i,v_j),(u_{n_1-1},v_{n_1-1}))&=\max\{d_{P_{n_1}}(u_i,u_{n_1-1}),d_{C_{n_2}}(v_j,v_{n_1-1})\}\\
&=\max \{n_1-1-i, d_{C_{n_2}}(v_j,v_{n_1-1}\}\\
&>  \max\{n_1-1-k ,d_{C_{n_2}}(v_l,v_{n_1-1})\}\\
&=d_{P_{n_1}\boxtimes C_{n_2}}((u_k,v_l),(u_{n_1-1},v_{n_1-1})).
\end{align*}
Moreover, if $k> \left\lfloor \frac{n_2}{2} \right\rfloor$, then
\begin{align*}d_{P_{n_1}\boxtimes C_{n_2}}((u_i,v_j),(u_{0},v_{0}))&=\max\{d_{P_{n_1}}(u_i,u_{0}),d_{C_{n_2}}(v_j,v_{0})\}\\
&=\max \{i, d_{C_{n_2}}(v_j,v_{0})\}\\
&<  \max\{k ,d_{C_{n_2}}(v_l,v_{0})\}\\
&=d_{P_{n_1}\boxtimes C_{n_2}}((u_k,v_l),(u_{0},v_{0})).
\end{align*}
Hence, $r((u_i,v_j)|S)\ne r((u_k,v_l)|S)$. Therefore, the set  $S$ of cardinality $n_1$ is a metric generator for $P_{n_1}\boxtimes C_{n_2}$.
\end{proof}


\end{document}